\numberwithin{equation}{section}
\newtheorem{Pa}{Paper}[section]
\newtheorem{theorem}[Pa]{Theorem}
\newtheorem{definition}[Pa]{Definition}
\newtheorem{notation}[Pa]{Notation}
\newtheorem{proposition}[Pa]{Proposition}
\newtheorem{remark}[Pa]{Remark}
\def\Om{\Omega}
\def\vp{\varphi}
\def\ve{\varepsilon}
\def\wh{\widehat}
\def\wt{\widetilde}
\def\ov{\overline}
\def\p{\partial}
\def\BC{{\mathbb C}}
\def\BR{{\mathbb R}}
\def\clp{{\mathcal P}}
\def\cln{{\mathcal N}}
\newcommand{\E}{\mathrm{e}}
\newcommand{\I}{\mathrm{i}}
\def\nn{\nonumber}
\newcommand{\cD}{{\mathcal D}}
\newcommand{\cV}{{\mathcal V}}
\title{Weyl functions and the boundary value problem for  a matrix nonlinear Schr\"odinger equation on a
semi-strip}
\author{Alexander Sakhnovich}
\date{}
\begin{document}
\maketitle

%*******************************************************************
%TITLE OF THE JOURNAL - NAME OF THE ISSUE AS WELL AS YEAR AND PAGE NUMBERS
%*******************************************************************
%\footnotesize {\flushleft \mbox{\bf \textit{Math. Model. Nat.
%Phenom.}}}
 %\\
%\mbox{\textit{{\bf %Vol. 7, No. 1, 2012, pp. 1-3
%}}}

\medskip

\thispagestyle{plain}

\vspace{0.5em}

\begin{abstract}  Rectangular matrix solutions of the defocusing nonlinear Schr\"odinger equation (dNLS)
are considered on a semi-strip.  Evolution of the corresponding Weyl function is described in terms of the
initial-boundary conditions. Then initial conditions are recovered from the boundary conditions.
Thus, solutions of  dNLS are recovered from the boundary conditions.

\end{abstract}
%*******************************************************************
%KEYWORDS
%*******************************************************************
\noindent {\bf Keywords:} nonlinear Schr\"odinger equation, initial--boundary value problem,
boundary conditions, Weyl function, evolution, matrix solutions

%*******************************************************************
%AMS SUBJECT CLASSIFICATION
%*******************************************************************

%*******************************************************************
%ARTICLE BODY
%*******************************************************************

\vspace*{1cm}

%*******************************************************************
%DO NOT FORGET TO RESET THE EQUATION COUNTER TO 0 AT THE HEAD OF EACH SECTION
%*******************************************************************

\section{Introduction} \label{Intro}
\setcounter{equation}{0}
The well-known matrix defocusing nonlinear Schr\"odinger (defocusing NLS or dNLS) equation
\begin{align}  \label{1.1} &
2v_t=\I(v_{xx}-2vv^*v) \qquad \Big(v_t:=\frac{\p}{\p t}v\Big)
\end{align}
is equivalent to the compatibility condition
\begin{align}  \label{zc} &
G_t-F_x+[G,F]=0 \qquad ([G,F]:=GF-FG)
\end{align}
of the auxiliary linear systems
\begin{align}  \label{au} &
y_x=Gy, \qquad y_t=Fy,
\end{align}
where
\begin{align}  \label{1.2} &
G=\I (zj+jV), \quad
F=-\I\big(z^2 j+ z jV-\big(\I V_x-jV^2\big)/2\big), \\
\label{1.3}&
j = \left[
\begin{array}{cc}
I_{m_1} & 0 \\ 0 & -I_{m_2}
\end{array}
\right], \quad  V= \left[\begin{array}{cc}
0&v\\v^{*}&0\end{array}\right],
\end{align}
$I_{m_1}$ is the $m_1\times m_1$ identity matrix and $v$ is an $m_1\times m_2$ matrix function.
We will consider dNLS equation on the semi-strip
\begin{equation}      \label{1.4}
{\mathcal D}=\{(x,\, t):\,0 \leq x <\infty, \,\,0\leq t<a\},
\end{equation}
and we note that the auxiliary system
\begin{align}  \label{1.5} &
y_x=Gy=\I(zj+jV)y
\end{align}
is (for each fixed $t$) a well-known self-adjoint Dirac system, also called AKNS or Zakharov-Shabat system.
The matrix function $v$ is called the potential of the system. Without changes in notations we speak about  usual derivatives inside domains
and about left or right 
(which should be clear from the context) derivatives
on  the boundaries, and boundaries of $\cD$ in particular.

Cauchy problems for nonlinear integrable equations were comparatively thoroughly studied
using the famous Inverse Scattering Transform. The situation with the initial-boundary value problems
for  integrable equations  is much more complicated. The theory of initial-boundary value problems, and their well-posedness in particular, 
is difficult even for the case of linear differential equations, see some results, discussions and references on this topic 
(related also to the integrable nonlinear equations) in \cite{Ash, Bre, DeMaS, Fok, SaAF}.
Boundary value problems for nonlinear equations are often called {\it forced} or {\it perturbed} 
problems. Such problems (as well as some other close to integrable cases) have numerous applications and are of great mathematical interest
(see, e.g., \cite{Persp, Ka}).
Therefore, they are actively investigated and many interesting results are obtained in spite of
remaining difficulties and open problems. First publications on initial-boundary value problems appeared only several years after
the great breakthrough for Cauchy problems for nonlinear integrable equations (see, e.g., \cite{Kv, KaNe}).
Interesting numerical \cite{ChuX}, uniqueness \cite{BW83, Ton} and local existence \cite{Kri, Holm} results followed.
Special {\it linearizable} cases of  boundary conditions were found using symmetrical reduction \cite{Skl} or BT (B\"acklund transformation) method 
\cite{BiTa, Hab}. Global existence results for cubic  NLS equations, Dirichlet and Neumann initial-boundary value problems,
were obtained using PDE methods in \cite{CB} and \cite{Kai}, respectively. Interesting approaches were developed by
D.J. Kaup and H. Steudel \cite{KaSt} and by P. Sabatier (elbow scattering) \cite{Sab1, Sab3}.
Finally, we should mention the well-known {\it global relation method} by A.S. Fokas, see \cite{Fok} and references therein.
(See also some discussions on the corresponding difficulties and open problems in \cite{Ash, BoFo, Fok}.)

In this paper we use the Inverse Spectral Transform approach \cite{Ber, BerG, Kv}.
More precisely, we follow the scheme introduced in \cite{Sa87, Sa88, SaLev2}, see also \cite[Ch. 12]{Sa99}
and references therein. That is, we describe the evolution of Weyl function in terms of linear-fractional transformations.
The scheme is applicable to various  integrable equations \cite{SaA021, SaAg, SaA7, SaAF, SaASG}
and several interesting  uniqueness and existence theorems were proved in this way (see \cite[Ch. 6]{SaSaR}
and references therein for more details). Most of the mentioned above uniqueness and existence theorems 
were obtained for the equations with scalar solutions. Here we consider dNLS \eqref{1.1} and auxiliary system
\eqref{1.5} for the case when $v$ is an $m_1 \times m_2$ matrix function, which is essential, for
instance, for the vector or multicomponent dNLS equations \cite[Ch. 4]{Abl0}. Evolution of the Weyl function of system \eqref{1.5} is described in terms of the
initial-boundary conditions. The connections between initial and boundary conditions are of interest in the theory of PDEs.
In our case the initial condition is recovered from the boundary conditions:
\begin{align}& \label{B3}
V(0,t)=\cV_0(t), \quad V_x(0,t)=\cV_1(t).
\end{align} 
Thus, a new boundary problem is solved and solutions of  dNLS are recovered from the boundary conditions. 

In Section \ref{Evol} we study the evolution of the Weyl function, Section \ref{Quasi} is dedicated to the recovery
of the quasi-analytic initial condition from the boundary conditions on a finite interval, and in Section \ref{quart}
we recover the initial condition, not necessarily quasi-analytic, from the boundary conditions on a semi-axis
(i.e., we consider the case of a quarter-plane).

As usual,  $\BR$ stands for the real axis,
$\BR_+=(0, \, \infty)$,
$\BC$ stands for the complex plain, and
$\BC_+$ for the open upper semi-plane. 
We say that $v(x)$ is locally summable if its entries are summable on all finite intervals of $[0, \, \infty)$.
We say that $v$ is continuously differentiable
if $v$ is differentiable and its first derivatives are continuous.
The notation $\| \cdot \|$ stands for the $l^2$  vector norm or the induced matrix norm.
%%%%%%%%%%%%%%%%%%%%%%%%%%%%%%%%%%%%%
%%%%%%%%%%%%%%%%%%%%%%%%%%%%%%%%%%%%%%
%%%%%%%%%%%%%%%%%%%%%%%%%%%%%%%%%%%%%%%%
\section{Evolution of the Weyl function} \label{Evol}
\setcounter{equation}{0}
The main statement in this section is Evolution Theorem \ref{evol}. Its proof is based
on the results of papers \cite{FKRSp1, SaAF}. Some of  those results we formulate here (see also \cite{SaSaR} for greater details and  historical 
remarks).
We denote by $u$ the fundamental solution of system \eqref{1.5} normalized by
the condition
\begin{align}&      \label{nc}
u(0,z)=I_m, \qquad m=m_1+m_2.
\end{align}
\begin{definition} \label{CyW2} Let Dirac system \eqref{1.5} on $[0, \, \infty)$
 be given and assume that $v$ is  locally summable.
Then Weyl function is an $m_2 \times m_1$ holomorphic matrix function, which satisfies the inequality
\begin{align}&      \label{2.0}
\int_0^{\infty}
\begin{bmatrix}
I_{m_1} & \vp(z)^*
\end{bmatrix}
u(x,z)^*u(x,z)
\begin{bmatrix}
I_{m_1} \\ \vp(z)
\end{bmatrix}dx< \infty .
\end{align}
\end{definition}
The following proposition is proved in \cite{FKRSp1} (and in  \cite[Section 2.2]{SaSaR}).
\begin{proposition} The Weyl function always exists and it is unique.
\end{proposition}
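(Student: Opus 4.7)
The plan is the matrix Weyl disc construction, adapted to the Dirac structure of system (1.5).

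First I would derive the key Lagrange identity. Since $G = \I(zj + jV)$ with $(jV)^* = -jV$ and $j^2 = I_m$, a direct computation gives $G(x,z)^* j + j G(x,z) = -2\,\im(z)\,I_m$, so
\[
\frac{d}{dx}\bigl(u(x,z)^* j\, u(x,z)\bigr) = -2\,\im(z)\, u(x,z)^* u(x,z).
\]
Integrating from $0$ to $l$ and using $u(0,z) = I_m$ yields
\[
j - u(l,z)^* j\, u(l,z) = 2\,\im(z) \int_0^l u(x,z)^* u(x,z)\, dx,
\]
the identity which controls the integral in (2.0).

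Next, fix $z \in \bbC_+$ and introduce the matrix Weyl disc
\[
\cN(l,z) = \Bigl\{\bigl(u_{21}(l,z) + u_{22}(l,z)\mathcal{Q}\bigr)\bigl(u_{11}(l,z) + u_{12}(l,z)\mathcal{Q}\bigr)^{-1} :\ \mathcal{Q}^*\mathcal{Q} \leq I_{m_1}\Bigr\},
\]
using the natural $m_1{+}m_2$ block partition of $u$. The $j$-expansivity $j - u(l,z)^* j u(l,z) \geq 0$ supplied by the identity above guarantees invertibility of $u_{11}(l,z) + u_{12}(l,z)\mathcal{Q}$ for every contractive $\mathcal{Q}$, so $\cN(l,z)$ is a well-defined closed matrix ball. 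Combining the linear-fractional description with the identity, one checks that every $\varphi \in \cN(l,z)$ satisfies
\[
2\,\im(z)\int_0^l \begin{bmatrix} I_{m_1} & \varphi^* \end{bmatrix} u(x,z)^* u(x,z) \begin{bmatrix} I_{m_1} \\ \varphi \end{bmatrix} dx \leq I_{m_1}.
\]
The family $\{\cN(l,z)\}_{l>0}$ is nested (decreasing in $l$), so by compactness $\bigcap_{l>0}\cN(l,z)$ is nonempty; any element of it satisfies (2.0), and a normal-family argument over $\bbC_+$ delivers a holomorphic selection $\varphi(z)$, proving existence.

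For uniqueness, the crux is that the ``radius'' of $\cN(l,z)$ contracts to zero as $l \to \infty$. If $\varphi_1, \varphi_2$ both satisfy (2.0), then both lie in $\bigcap_{l>0}\cN(l,z)$, and expressing their difference via the Möbius parameterization above bounds $(\varphi_1 - \varphi_2)^*(\varphi_1 - \varphi_2)$ by tails of the convergent integral $\int_0^\infty [\,\cdot\,]\, u^* u\, [\,\cdot\,]\, dx$, forcing $\varphi_1 = \varphi_2$. The main obstacle is the rectangular case $m_1 \neq m_2$: the asymmetric linear-fractional parameterization and the contraction of the disc's radius both rely crucially on the quantitative $j$-expansivity supplied by the key identity -- qualitative monotonicity of $u(l,z)^* j u(l,z)$ is not enough.
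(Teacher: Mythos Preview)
Your overall strategy---the Lagrange identity, Weyl discs parameterized by M\"obius maps, nested matrix balls whose intersection is a single point---is exactly the structure behind the result; indeed the paper does not prove the proposition in situ but refers to \cite{FKRSp1}, while the surrounding material (Definition~\ref{set}, Remark~\ref{MB}, Proposition~\ref{PnW1}) records the same architecture.

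There is, however, a concrete error in your M\"obius parameterization: the Weyl disc must be built from $u(l,z)^{-1}$, not from $u(l,z)$. Compare your formula with the paper's Definition~\ref{set}, which reads
\[
\vp=\begin{bmatrix}0 & I_{m_2}\end{bmatrix}u(l,z)^{-1}\clp\Big(\begin{bmatrix}I_{m_1} & 0\end{bmatrix}u(l,z)^{-1}\clp\Big)^{-1}.
\]
From your own identity $j-u(l,z)^*ju(l,z)\ge 0$, the matrix $u(l,z)$ is $j$-\emph{contractive}; the associated M\"obius map $T_{u(l)}$ therefore sends the unit ball of contractions into itself, but the images $T_{u(l)}(\mathrm{ball})$ need not be nested and in general \emph{expand}. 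Take the free case $V\equiv 0$: then $u(l,z)=\diag(e^{\I z l}I_{m_1},\,e^{-\I z l}I_{m_2})$ and your map is $T_{u(l)}(\mathcal{Q})=e^{-2\I z l}\mathcal{Q}$, whose norm is $e^{2\,\im(z)\,l}\|\mathcal{Q}\|$, so your $\cN(l,z)$ is a ball of radius $e^{2\,\im(z)\,l}\to\infty$. With the correct choice $u(l,z)^{-1}$ one gets radius $e^{-2\,\im(z)\,l}\to 0$, which is what drives both the nesting and the uniqueness.

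The same reversal invalidates your claimed integral bound. Membership in the \emph{correct} disc is equivalent to $\Phi^*u(l,z)^*ju(l,z)\Phi\ge 0$ (with $\Phi=[I_{m_1};\vp]$), which via your Lagrange identity gives $2\,\im(z)\int_0^l\Phi^*u^*u\,\Phi\,dx\le \Phi^*j\Phi\le I_{m_1}$. Your parameterization instead yields $\Phi^*(u(l,z)^{-1})^*j\,u(l,z)^{-1}\Phi\ge 0$, from which no such bound follows. Once you switch $u$ to $u^{-1}$ in the definition of $\cN(l,z)$, the rest of your outline (nesting by $j$-monotonicity, nonempty intersection, holomorphic selection, and radius contraction forcing uniqueness) goes through and coincides with the argument of \cite{FKRSp1} summarized in Remark~\ref{MB} and Proposition~\ref{PnW1}.
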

In order to construct the Weyl function, we introduce a class  of nonsingular $m \times m_1$ matrix functions 
$\clp(z)$ with property-$j$, which are an immediate analog of the classical pairs
of parameter matrix functions. Namely, the matrix functions 
$\clp(z)$ are meromorphic in $\BC_+$ and satisfy
(excluding, possibly, a discrete set of points)
the following relations
\begin{align}\label{2.1}&
\clp(z)^*\clp(z) >0, \quad \clp(z)^* j \clp(z) \geq 0 \quad (z\in \BC_+).
\end{align}
Relations \eqref{2.1} imply (see, e.g., \cite{FKRSp1}) that
\begin{align}&      \label{wd}
\det \Big(\begin{bmatrix}
I_{m_1} & 0
\end{bmatrix}u(x,z)^{-1}\clp(z)\Big)\not= 0.
\end{align}
\begin{definition} \label{set}
The set $\cln(x,z)$ of M\"obius transformations is the set of values $($at the fixed $x\in [0, \, \infty), \,z\in \BC_+)$ 
of matrix functions
\begin{align}\label{2.2}&
\vp(x,z,\clp)=\begin{bmatrix}
0 &I_{m_2}
\end{bmatrix}u(x,z)^{-1}\clp(z)\Big(\begin{bmatrix}
I_{m_1} & 0
\end{bmatrix}u(x,z)^{-1}\clp(z)\Big)^{-1},
\end{align}
where $\clp(z)$ are nonsingular  matrix functions 
 with property-$j$. 
 \end{definition}
 \begin{remark}\label{MB}
 It was shown in \cite{FKRSp1} that a family $\cln(x,z)$, where $x$ increases to infinity and $z \in \BC_+$ is fixed,
is a family of embedded matrix balls such that the right semi-radii
 are uniformly bounded and the left semi-radii tend to zero.
 \end{remark}
 \begin{proposition}\cite{FKRSp1} \label{PnW1} Let Dirac system \eqref{1.5} on $[0, \, \infty)$
 be given and assume that $v$ is  locally summable.
 Then the sets $\cln(x,z)$
 are well-defined. There is a unique matrix function
 $\vp(z)$ in $\BC_+$ such that
\begin{align}&      \label{2.3}
\{\vp(z) \}=\bigcap_{x<\infty}\cln(x,z).
\end{align} 
This function is analytic and non-expansive.
Furthermore, this function coincides with the Weyl function of system \eqref{1.5}.
 \end{proposition}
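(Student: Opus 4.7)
\medskip
\noindent\textbf{Proof proposal.} My plan is to follow the classical Weyl-disk strategy, translated to the matrix setting via the property-$j$ parameter $\clp(z)$. The foundational computation is the identity for the fundamental solution: differentiating $u(x,z)^* j u(x,z)$ in $x$ and using $G^* j + jG = \I(z-\ov z)I_m = -2(\im z)\,I_m$ yields, for $z \in \BC_+$,
\begin{align}\label{PP1}
j - u(x,z)^* j u(x,z) = 2(\im z)\int_0^x u(\xi,z)^* u(\xi,z)\,d\xi \geq 0.
\end{align}
Setting $\psi(x,z)=u(x,z)^{-1}\clp(z)$ and using $\clp^*j\clp\geq 0$, \eqref{PP1} gives $\psi^* j \psi \geq \clp^* j \clp \geq 0$. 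Splitting $\psi = \col(\psi_1,\psi_2)$ in accordance with $j$, this means $\psi_1^*\psi_1 \geq \psi_2^*\psi_2$; since $\psi^*\psi>0$ and $\psi_2^*\psi_2\leq\psi_1^*\psi_1$, the block $\psi_1=[I_{m_1}\ 0]u^{-1}\clp$ is invertible. This proves \eqref{wd} and hence that $\cln(x,z)$ is well-defined, and in addition $\vp(x,z,\clp)=\psi_2\psi_1^{-1}$ satisfies $\vp^*\vp\leq I_{m_1}$, i.e.\ every element of $\cln(x,z)$ is non-expansive.

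Next I would establish the nested inclusion $\cln(x_2,z)\subseteq \cln(x_1,z)$ for $x_1\leq x_2$. Applying \eqref{PP1} to the transition matrix $T(x_1,x_2,z):=u(x_1,z)u(x_2,z)^{-1}$ gives $T^*jT \geq j$ when $x_1\leq x_2$ and $z\in\BC_+$. Hence, given $\clp$ with property-$j$, the matrix $\wti\clp:=T(x_1,x_2,z)\clp$ again has property-$j$ and satisfies $u(x_1,z)^{-1}\wti\clp = u(x_2,z)^{-1}\clp$, so the same M\"obius value is realized at $x_1$. This is precisely the nesting of the matrix balls referred to in Remark~\ref{MB}.

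The central input is then Remark~\ref{MB}: for locally summable $v$ on $[0,\infty)$ the left semi-radii of $\cln(x,z)$ shrink to zero as $x\to\infty$ while the right semi-radii stay bounded. This is the limit-point dichotomy in matrix form; I would quote it from \cite{FKRSp1}. It implies that $\bigcap_{x<\infty}\cln(x,z)$ consists of a single matrix $\vp(z)$. Analyticity of $\vp(z)$ in $\BC_+$ follows from a normal-family argument: the approximants $\vp(x,z,\clp)$ are uniformly bounded by non-expansiveness, meromorphic in $z$, and converge locally uniformly to $\vp(z)$; uniform convergence of non-expansive analytic matrix functions (e.g.\ by Montel) delivers analyticity.

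Finally, to identify $\vp(z)$ with the Weyl function, I would revisit \eqref{PP1} evaluated on the column $\col(I_{m_1},\vp(z))$: writing $\clp$ as $u(x,z)\col(I_{m_1},\vp(z))\psi_1$ and using $\clp^*j\clp\geq 0$, one obtains
\begin{align}\label{PP2}
I_{m_1}-\vp(z)^*\vp(z) \geq 2(\im z)\int_0^x \begin{bmatrix}I_{m_1} & \vp(z)^*\end{bmatrix} u(\xi,z)^* u(\xi,z) \begin{bmatrix}I_{m_1} \\ \vp(z)\end{bmatrix} d\xi.
\end{align}
Letting $x\to\infty$ in \eqref{PP2} yields \eqref{2.0}, so $\vp(z)$ satisfies the defining inequality of Definition~\ref{CyW2}; uniqueness from the already-cited Proposition completes the identification. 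The main obstacle in this plan is the limit-point assertion (the shrinking of left semi-radii), whose proof requires a quantitative analysis of the M\"obius images and the growth of $\int_0^x u^*u\,d\xi$; I would cite \cite{FKRSp1} rather than reprove it. The remaining passages from matrix inequalities to the summability condition are routine once \eqref{PP1} is in hand.
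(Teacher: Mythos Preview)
The paper does not actually prove Proposition~\ref{PnW1}: it is quoted verbatim from \cite{FKRSp1} and no argument is given in the text beyond the surrounding Remark~\ref{MB}. So there is no ``paper's own proof'' to compare against; what you have written is a faithful reconstruction of the standard Weyl--disk argument that \cite{FKRSp1} carries out. Your identity \eqref{PP1}, the deduction of \eqref{wd}, the nesting via $T(x_1,x_2,z)^*jT(x_1,x_2,z)\geq j$, the appeal to Remark~\ref{MB} for the limit--point behaviour, and the normal--family argument for analyticity are all correct and are precisely the ingredients of the cited proof.

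One small point of presentation in your last step: when you write ``$\clp = u(x,z)\col(I_{m_1},\vp(z))\psi_1$ and using $\clp^*j\clp\geq 0$'' you should make explicit that $\clp$ here depends on $x$. What you are really using is that, since $\vp(z)\in\cln(x,z)$ for every $x$, there exists for each $x$ a parameter $\clp_x$ with property-$j$ realizing $\vp(z)$ at that $x$; the inequality $\clp_x^*j\clp_x\geq 0$ then gives $[I_{m_1}\ \vp(z)^*]\,u(x,z)^*ju(x,z)\,\col(I_{m_1},\vp(z))\geq 0$, and combining this with \eqref{PP1} yields \eqref{PP2}. With that clarification the argument is complete.
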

Formula \eqref{2.3} is completed by the asymptotic relation
\begin{align}&      \label{2.3!}
\vp(z)=\lim_{b \to \infty}\vp_b(z),
\end{align} 
which is valid for any set of functions $\vp_b(z)\in \cln(b,z)$. Relation \eqref{2.3!} follows from \eqref{2.3}
and Remark \ref{MB} (see also \cite[Remark 2.24]{SaSaR}).

Next, we consider the famous compatibility condition (zero curvature equation) \eqref{zc}.
The sufficiency of this condition  was studied in a more rigorous way and 
the corresponding important factorization formula for fundamental solutions was
introduced in \cite{Sa87, Sa88, Sa99}. The  factorization formula for fundamental solutions
(see \eqref{2.6} below)
was proved in greater detail and under weaker
conditions in \cite{SaAF}. More specifically, we have the following general proposition,
where specific form \eqref{1.2} of $G$ and $F$ is not essential. 
\begin{proposition} \label{TmM} \cite{SaAF}
Let some $m \times m$ matrix functions $G$ and $F$ and their derivatives $G_t$ and
$F_x$  exist on the semi-strip $\cD$,
let $G$, $G_t$ and $F$ be continuous with respect to $x$ and $t$ on $\mathcal{D}$,
 and let \eqref{zc}
hold. Then we have the equality 
\begin{equation} \label{2.4}
u(x,t,z)R(t,z)=R(x,t,z)u(x,0,z), \quad R(t,z):=R(0,t,z),
\end{equation}
where $u(x,t,z)$ and $R(x,t,z)$ are normalized fundamental solutions given, respectively, by:
\begin{equation} \label{2.5}
u_x=Gu, \quad u(0,t,z)=I_m; \quad R_t=FR, \quad R(x,0,z)=I_m.
\end{equation}
The equality \eqref{2.4} means that the matrix function 
$$y(x,t,z)=u(x,t,z)R(t,z)=R(x,t,z)u(x,0,z)$$ 
satisfies both systems \eqref{au} in 
$\cD$. Moreover, the fundamental solution $u$ admits the factorization
\begin{equation} \label{2.6}
u(x,t,z)=R(x,t,z)u(x,0,z)R(t,z)^{-1}.
\end{equation}
\end{proposition}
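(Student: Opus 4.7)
The plan is to prove the identity \eqref{2.4} by showing that both of its sides, viewed as matrix functions of $x$ with $t$ and $z$ fixed as parameters, solve the same linear first-order ODE in $x$ with the same initial value at $x=0$. For the left-hand side $u(x,t,z)R(t,z)$ this is trivial: since $R(t,z)$ does not depend on $x$, its $x$-derivative equals $G(x,t,z)u(x,t,z)R(t,z)$, and its value at $x=0$ is $R(t,z)$. The right-hand side $R(x,t,z)u(x,0,z)$ also takes the value $R(0,t,z)=R(t,z)$ at $x=0$, so everything reduces to verifying that it too satisfies $y_x = Gy$.

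That verification in turn reduces to an auxiliary identity for the $x$-derivative of $R$, namely
\begin{equation} \label{Rxeq}
R_x(x,t,z) \;=\; G(x,t,z)R(x,t,z) \;-\; R(x,t,z)G(x,0,z).
\end{equation}
I would establish \eqref{Rxeq} by fixing $x$ and $z$ and treating
\[
\Phi(x,t,z) \;:=\; R_x(x,t,z) - G(x,t,z)R(x,t,z) + R(x,t,z)G(x,0,z)
\]
as a function of $t$. The initial condition $R(x,0,z)=I_m$ forces $R_x(x,0,z)=0$, so $\Phi(x,0,z)=0$. Differentiating $\Phi$ in $t$ using $R_t=FR$ together with the equality of mixed partials $R_{xt}=(FR)_x=F_xR+FR_x$, and then substituting $G_t=F_x+[F,G]$ from the zero curvature relation \eqref{zc}, one finds that every term not already of the form $F\cdot(\,\cdot\,)$ cancels, so $\Phi$ satisfies the homogeneous linear ODE $\Phi_t=F\Phi$. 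Uniqueness for linear ODEs then yields $\Phi\equiv 0$.

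Given \eqref{Rxeq}, the $x$-derivative of $R(x,t,z)u(x,0,z)$ collapses to $G(x,t,z)R(x,t,z)u(x,0,z)$, so both sides of \eqref{2.4} solve the same initial value problem in $x$ and must coincide. Both auxiliary systems in \eqref{au} then follow at once from the two equivalent representations of $y(x,t,z)$: the representation $y=u(x,t,z)R(t,z)$ gives $y_x=Gy$ directly, while $y=R(x,t,z)u(x,0,z)$ combined with $R_t=FR$ gives $y_t=Fy$. The factorization \eqref{2.6} is then immediate upon multiplying \eqref{2.4} on the right by $R(t,z)^{-1}$, which exists because $R(t,z)$ is the value of a fundamental solution of a linear ODE.

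The main obstacle is the auxiliary identity \eqref{Rxeq}: this is the one place where the zero curvature equation \eqref{zc} actually enters, and it is also where the continuity assumptions on $G,G_t,F,F_x$ are used (to justify the interchange of $\p_x$ and $\p_t$ applied to $R$). Once \eqref{Rxeq} is secured, everything else is a standard uniqueness argument for systems of linear ODEs.
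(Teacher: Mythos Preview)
The paper does not prove this proposition; it is quoted from the reference \cite{SaAF} and stated without argument, so there is no in-paper proof to compare against. That said, your approach is the standard one and is essentially the same as the proof given in \cite{SaAF}: one reduces \eqref{2.4} to the auxiliary identity $R_x=G R-RG(x,0,z)$, and proves the latter by showing that the discrepancy $\Phi$ satisfies $\Phi_t=F\Phi$ with $\Phi(x,0,z)=0$.

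Two small points are worth tightening. First, the hypotheses only assert that $F_x$ \emph{exists}; its continuity, which you invoke, is not assumed directly but follows from the zero curvature relation $F_x=G_t+[G,F]$ together with the assumed continuity of $G$, $G_t$, and $F$. Second, before you can form $\Phi$ you must know that $R_x$ exists. This is exactly the differentiable-dependence-on-parameters theorem for the ODE $R_t=FR$, $R(x,0,z)=I_m$, with $x$ as parameter: since $F$ is continuous in $(x,t)$ and $F_x$ is continuous (by the previous remark), $R$ is $C^1$ in $x$ and $R_x$ satisfies $(R_x)_t=F_xR+FR_x$, $R_x(x,0,z)=0$, which is precisely the equation you then use. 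Once these two regularity observations are made explicit, your argument is complete.
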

In a way, which is similar to the $m_1=m_2$ case (see \cite{Sa88})
and to the focusing NLS equation case (see \cite{FKRSskew, SaA021})
we obtain our main result in this section.
  \begin{theorem}\label{evol} Let an $m_1 \times m_2$ matrix function $v(x,t)$ 
 be  continuously differentiable on $\cD$
 and let  $v_{xx}$ exist. 
 Assume that $v$  satisfies the dNLS equation \eqref{1.1}  as well as  the following inequalities
 $($for all $0 \leq t<a$ and some values $M(t)\in \BR_+):$
  \begin{equation} \label{2.7}
  \sup_{x \in \BR_+, \, 0\leq s \leq t}\|v(x,s)\| \leq M(t).
\end{equation}
 Then the evolution $\vp(t,z)$ of the Weyl functions  of  Dirac systems  \eqref{1.5} is given $($for $\Im(z)>0)$ by the equality
 \begin{equation} \label{2.8}
\vp(t,z)=\big(R_{21}(t,z)+R_{22}(t,z)\vp(0,z)\big)
\big(R_{11}(t,z)+R_{12}(t,z)\vp(0,z)\big)^{-1}.
\end{equation}
\end{theorem}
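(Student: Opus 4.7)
The plan is to verify that the right-hand side of \eqref{2.8}, which I denote $\wt\vp(t,z)$, is the Weyl function of the Dirac system \eqref{1.5} at time $t$, by exhibiting it as the $b\to\infty$ limit of elements of $\cln(b,t,z)$ and invoking the matrix-ball characterization \eqref{2.3!} from Proposition \ref{PnW1}. The construction rests on the factorization \eqref{2.6}. For an arbitrary nonsingular $m\times m_1$ function $\clp_0(z)$ with property-$j$, I set $\clp_b(z):=R(b,t,z)\clp_0(z)$. Inversion of \eqref{2.6} gives
$$u(b,t,z)^{-1}\clp_b(z) = R(t,z)\,u(b,0,z)^{-1}\clp_0(z),$$
so if $u(b,0,z)^{-1}\clp_0(z)=\begin{bmatrix}A_b \\ B_b\end{bmatrix}$ with $A_b$ invertible (by \eqref{wd}) and $\psi_b:=B_bA_b^{-1}\in\cln(b,0,z)$, a block computation expresses the M\"obius image \eqref{2.2} as
$$\vp(b,t,z,\clp_b)=\bigl(R_{21}(t,z)+R_{22}(t,z)\psi_b(z)\bigr)\bigl(R_{11}(t,z)+R_{12}(t,z)\psi_b(z)\bigr)^{-1}.$$
By \eqref{2.3!}, $\psi_b(z)\to\vp(0,z)$ as $b\to\infty$.

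The nontrivial step, which I expect to be the main obstacle, is verifying that $\clp_b$ has property-$j$, so that the displayed expression is indeed an element of $\cln(b,t,z)$. Since $\clp_b^*\,j\,\clp_b=\clp_0^*(R^*jR)\clp_0$, this reduces to $R(b,t,z)^*\,j\,R(b,t,z)\ge j$. Differentiating in $t$, using $R_t=FR$ and $R(b,0,z)=I_m$, reduces this in turn to $jF+F^*j\ge 0$. A calculation from the explicit form \eqref{1.2} and the anticommutation $jV=-Vj$ (which gives $jV_x+V_xj=0$ and $jV^2j=V^2$) yields
$$jF+F^*j \;=\; \I(\ov z^2 - z^2)\,I_m + \I(\ov z - z)\,V \;=\; 2\,\Im(z)\bigl(2\,\Re(z)\,I_m + V(b,t)\bigr),$$
which, by the uniform bound \eqref{2.7}, is $\ge 0$ whenever $\Im(z)>0$ and $\Re(z)\ge M(t)/2$. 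At a given $z_0$ in this half-plane, the value $\clp_b(z_0)$ is also realized as the value at $z_0$ of the globally property-$j$ function $\wt{\clp}(z):=R(b,t,z_0)\clp_0(z)$, so that the M\"obius image at $z_0$ is a genuine point of $\cln(b,t,z_0)$.

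Passing to the limit $b\to\infty$, the M\"obius image converges to $\wt\vp(t,z)$ by continuity (wherever $R_{11}+R_{12}\vp(0,z)$ is invertible), while it also lies in $\cln(b,t,z)$ and therefore converges to $\vp(t,z)$ by \eqref{2.3!}. This gives $\wt\vp(t,z)=\vp(t,z)$ on the half-plane $\{z\in\BC_+:\Re z\ge M(t)/2\}$. Both sides are meromorphic on $\BC_+$, the left since it is the holomorphic Weyl function, the right since $R(t,z)$ is entire in $z$ and $\vp(0,z)$ is holomorphic on $\BC_+$; the identity \eqref{2.8} therefore extends to all of $\BC_+$ by analytic continuation.
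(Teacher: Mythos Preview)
Your approach is essentially the paper's, run in the reverse direction: the paper starts with a property-$j$ function $\clp$ at time $t$, sets $\wt\clp:=R(x,t,z)^{-1}\clp$, and shows $\wt\clp$ has property-$j$ in the \emph{left} quarter-plane $\Om_t=\{z\in\BC_+:\Re z<-M(t)/2\}$ (via $(R^*)^{-1}jR^{-1}\ge j$ there), whereas you push $\clp_0$ forward by $R(b,t,z)$ and work in the \emph{right} half-plane. Both routes rest on the same factorization \eqref{2.6}, the same computation of $F^*j+jF$, and the same limit characterization \eqref{2.3!}.

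There is one gap. You assert $\wt\vp(t,z)=\vp(t,z)$ on the half-plane $\{z\in\BC_+:\Re z\ge M(t)/2\}$, but the parenthetical ``(wherever $R_{11}+R_{12}\vp(0,z)$ is invertible)'' is never discharged: you have not shown that $R_{11}(t,z)+R_{12}(t,z)\vp(0,z)$ is invertible at any point, and the fact that $R_{11}+R_{12}\psi_b$ is invertible for each $b$ does not pass to the limit. The paper treats this separately (relations \eqref{2.19}--\eqref{2.21}): from $R(t,z)^*jR(t,z)\ge j$ together with the contractivity $\vp(0,z)^*\vp(0,z)\le I_{m_1}$, any vector $g$ with $(R_{11}+R_{12}\vp(0,z))g=0$ satisfies $R(t,z)\begin{bmatrix}I_{m_1}\\ \vp(0,z)\end{bmatrix}g=0$, forcing $g=0$. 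In your framework this is immediate once observed: your inequality $R(b,t,z)^*jR(b,t,z)\ge j$ for $\Re z\ge M(t)/2$ holds for every $b\ge 0$, in particular for $b=0$, which is exactly the needed bound $R(t,z)^*jR(t,z)\ge j$. Inserting this one sentence closes the argument.
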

\begin{proof} Using the equivalence between the dNLS and  the compatibility condition \eqref{zc}
for  $G$ and $F$ given
by \eqref{1.2},  and taking into account
 the smoothness conditions on $v$, we see that  the requirements of Proposition \ref{TmM} are satisfied,
that is, \eqref{2.6} holds.  We rewrite \eqref{2.6} in the form
\begin{equation} \label{2.6'}
u(x,t,z)^{-1}=R(t,z)u(x,0,z)^{-1}R(x,t,z)^{-1}.
\end{equation}
First, we consider $R(x,t,z)$.
Since $V=V^*$ and $jV =-V j$, it follows from \eqref{1.2}
  that  
  $$F(x,s, z)^*j+jF(x,s,  z)=
 \I(\ov z-z)\big((z+\ov z)I_m+V(x,s) \big).
  $$ 
  Hence, using    \eqref{2.5} we get
\begin{align}&\nn
  \frac{\p}{\p s}\big(R(x,s,  z)^*jR(x,s,z)\big)=R(x,s,  z)^*(F(x,s,z)^*j+jF(x,s,  z))R(x,s,z)
  \\ & \label{2.9}=
\I(\ov z-z)R(x,s,  z)^*\big((z+\ov z)I_m+V(x,s) \big)R(x,s,z) .
\end{align}
In view of \eqref{2.7} and \eqref{2.9} the inequality
\begin{align}&\label{2.10}
  \frac{\p}{\p s}\big(R(x,s,  z)^*jR(x,s,z)\big)<0 \quad (0\leq s \leq t) \end{align}
  holds in  the quarterplane
\begin{align}&\label{2.11}
\Om_t=\{z \in \BC_+:  \Re z<-M(t)/2\}.
 \end{align}
 Inequality \eqref{2.10} and the initial condition
 $R(x,0,z)=I_m$ imply $R(x,t,  z)^*jR(x,t,z)\leq j$, or, equivalently,
\begin{align} \label{2.12}&
\big(R(x,t,  z)^*\big)^{-1}jR(x,t,z)^{-1}\geq j, \qquad z \in \Om_t.
\end{align}

For  ${\cal P}(z)$ satisfying \eqref{2.1}, we  determine $\wt\clp(x,t,z)$ (sometimes we 
 write also $\wt\clp(z)$, omitting $x$ and $t$) by the equality
\begin{equation} \label{2.13}
\wt\clp(x,t,z):=R(x,t,z)^{-1}\clp(z).
\end{equation}
In view of  \eqref{2.12}  the matrix function $\wt\clp$ satisfies \eqref{2.1} 
in $\Om_t$.
Using \eqref{2.6'} and \eqref{2.13}, we see that
\begin{align} \label{2.14}
u(x,t,z)^{-1}\clp(z)=
R(t,z)
\begin{bmatrix} I_{m_1} \\ \phi(x,t, z) \end{bmatrix}
\begin{bmatrix} I_{m_1} & 0 \end{bmatrix}u(x,0,z)^{-1}\wt \clp(z),
\end{align}
where $\wt \clp(z)= \wt \clp(x,t,z)$ and
\begin{align} \label{2.15}
\phi(x,t,z)=
\begin{bmatrix} 0 & I_{m_2} \end{bmatrix}u(x,0,z)^{-1}\wt \clp(x,t,z)
\left(\begin{bmatrix} I_{m_1} & 0 \end{bmatrix}u(x,0,z)^{-1}\wt \clp(x,t,z)\right)^{-1}.
\end{align}
Here, \eqref{wd} yields the following inequalities in $\Om_t$:
\begin{align} \label{2.16}
\det \left(\begin{bmatrix} I_{m_1} & 0 \end{bmatrix}u(x,0,z)^{-1}\wt \clp(z)\right)\not=0,
\quad
\det \left(\begin{bmatrix} I_{m_1} & 0 \end{bmatrix}u(x,t,z)^{-1} \clp(z)\right)\not=0.
\end{align}
According to \eqref{2.14} and \eqref{2.16} we have (in the quarterplane $\Om_t$) the equality
\begin{align}\nn&
\begin{bmatrix}
0 &I_{m_2}
\end{bmatrix}u(x,t,z)^{-1}\clp(z)\Big(\begin{bmatrix}
I_{m_1} & 0
\end{bmatrix}u(x,t,z)^{-1}\clp(z)\Big)^{-1}
\\ & \label{2.17}
=\big(R_{21}(t,z)+R_{22}(t,z)\phi(x,t,z)\big)
\big(R_{11}(t,z)+R_{12}(t,z)\phi(x,t,z)\big)^{-1},
\end{align}
where
\begin{align}& \label{2.18}
\det\big(R_{11}(t,z)+R_{12}(t,z)\phi(x,t,z)\big)\not=0.
\end{align}
Next, we recall that according to Proposition \ref{PnW1} the inequality
\begin{align}& \label{2.18'}
\begin{bmatrix} I_{m_1} & \vp(t,z)^*\end{bmatrix} j \begin{bmatrix} I_{m_1} \\ \vp(t,z)\end{bmatrix}\geq 0
\end{align}
holds for the Weyl functions $\vp$. Using \eqref{2.9}, in a way similar to the proof of \eqref{2.12}
we show that the inequality
\begin{align}&\label{2.19}
 R(t,  z)^*jR(t,z) \geq j 
 \end{align}
  holds in  the quarterplane
\begin{align}&\label{2.20}
\wh \Om_t=\{z \in \BC_+:  \Re z>M_0(t)/2\}, \quad M_0(t):=\max_{r\in [0, \, t]}\|v(0,r)\|.
 \end{align}
Because of  \eqref{2.18'} and \eqref{2.19} we have the inequality
\begin{align}& \label{2.21}
\det\big(R_{11}(t,z)+R_{12}(t,z)\vp(0,z)\big)\not=0
\end{align}
in $\wh \Om_t$. Hence, in view of the analyticity, the inequality \eqref{2.21} holds in $\BC_+$ excluding,
possibly, some discrete points.

Taking into account Definition \ref{set}, we see that the left-hand side
of \eqref{2.17} belongs $\cln(x,t,z)$ and $\phi(x,t,z) \in \cln(x,0,z)$,
where $\phi$ is defined in \eqref{2.15}. Therefore, \eqref{2.8} follows from
\eqref{2.3!},  \eqref{2.17}, and \eqref{2.21}, when $x$ tends to infinity (and $z\in \Om_t$). Although, we first derived
\eqref{2.8} only for $z \in \Om_t$, we see that it holds everywhere in $\BC_+$ via analyticity. 
\end{proof}
\begin{remark}\label{RkInvPr}  According to \cite{Sa14}, the Dirac system $y_x=Gy$ $($where $G$ is given by \eqref{1.2} and \eqref{1.3}
and $v$ is locally square summable$)$
is uniquely recovered from the Weyl function $\vp$. Thus, $v$ is uniquely recovered from $\vp$, see the procedure in \cite[Theorem 4.4]{Sa14}.
The case of a more smooth (i.e., locally bounded) $v$ was dealt with in \cite{SaSaR}, see also references therein.
\end{remark}
%%%%%%%%%%%%%%%%%%%%%%%%%%%%%%%%%%%%%%%%%%%%
%%%%%%%%%%%%%%%%%%%%%%%%%%%%%%%%%%%%%%%%%%%
%%%%%%%%%%%%%%%%%%%%%%%%%%%%%%%%%%%%%%%%%%
\section{Quasi-analytic initial condition}\label{Quasi}
\setcounter{equation}{0}
The class $C\big(\{\wt M_k\}\big)$  
consists of all infinitely differentiable on $[0, \infty)$ scalar functions $f$ such that for some 
$a(f) \geq 0$ and  for fixed constants $\wt M_k >0 $ $(k \geq 0)$ we have
\begin{align}& \label{B1}
\left| \frac{d^k f}{dx^k}(x)\right|\leq a(f)^{k+1}\wt M_k \quad {\mathrm{for \, all}} \quad x\in [0, \infty).
\end{align}
Here, we use the notation $\wt M_k$ (as well $\wt M$ below)  because the upper estimates $M$  (without tilde) were already used
in the previous section.
Recall that $C\big(\{\wt M_k\}\big)$    is called quasi-analytic if 
for the functions $f$ from this class and for any $ x\geq 0$ the equalities $\frac{d^k f}{dx^k}(x)=0$ $(k\geq 0)$ yield $f\equiv 0$.
According to the famous Denjoy--Carleman theorem, the inequality
\begin{align}& \label{B2}
\sum_{n=1}^{\infty}\frac{1}{L_n}=\infty, \quad L_n:=\inf_{k\geq n}\wt M_k^{1/k}
\end{align}
implies that  the class $C\big(\{\wt M_k\}\big)$ is quasi-analytic.
\begin{notation}\label{NnB1} We consider  $m_1 \times m_2$ matrix functions $v(x,t)$,
which are continuously  differentiable
on the semistrip $\cD$, satisfy \eqref{2.7} and are 
such that $v_{xx}$ exists  and
the entries $v_{ij}(x,0)$ of  $v(x,0)$ belong to the quasi-analytic classes $C\big(\{\wt M_k(i,j)\}\big)$.
Moreover, we require that for each $k$ there is a value $\ve_k=\ve_k(v)>0$ such that $v$
is $k$ times continuously differentiable with respect to $x$ in the square 
\begin{align}& \label{B2'}
\cD({\ve_k})=\{(x,\, t): \quad  0\leq x\leq \ve_k, \quad 0\leq t\leq \ve_k\}, \quad \cD({\ve_k})\subset \cD.
\end{align}
The classes of such functions $v(x,t)$ are denoted by $C_q(\cD, \wt M)$, where 
$\wt M=\{M_k(i,j)\}$.
\end{notation}
Further we shall need a stronger version of the well-known Clairaut's (or Schwarz's)  theorem on mixed derivatives
(see, e.g., \cite{Aks, Mar, See}). Since we need this version in the closed rectangular
$$\cD(\ve, \wh \ve)=\{(x, \, t): \quad 0\leq x\leq \ve, \quad 0\leq t\leq \wh \ve\},$$
we note that  a simple proof of Clairaut's theorem, which is given, for instance, in \cite{See}, is valid for the
whole domain $\cD(\ve, \wh \ve)$, and not only for the interior of $\cD(\ve, \wh \ve)$ as formulated in the statement
(C) from \cite{See}. (In fact, we should switch sometimes from the integrals $\int_0^{x}\cdot ds$
and $\int_0^{t}\cdot ds$ in this proof to the integrals $\int_x^{\ve}\cdot ds$
and $\int_t^{\wh \ve}\cdot ds$, respectively.) Thus, we have the following proposition.
\begin{proposition}\label{PnB2} If the functions $f$, $f_t$ and $f_{tx}$ exist and are continuous on $\cD(\ve, \wh \ve)$
and the derivative $f_x(x, 0)$ exists for $0\leq x \leq  \ve$, then $f_x$ and $f_{xt}$ exist on $\cD(\ve, \wh \ve)$
and $f_{xt}=f_{tx}$. $($Here $f_{xt} = \p f_x / \p t$ and $f_{tx} = \p f_t / \p x.)$
\end{proposition}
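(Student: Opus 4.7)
The plan is to follow Seeley's integral proof of the equality of mixed partials (as in \cite{See}), adapted so that at the four sides of the closed rectangle $\cD(\ve,\wh\ve)$ every integral used to express a difference quotient is oriented inward. The two ingredients are the one-variable fundamental theorem of calculus and differentiation under the integral sign; both survive on the compact closed rectangle because all of the given data is continuous there.

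First I would use FTC in the $t$-variable: since $f_t$ is continuous on $\cD(\ve,\wh\ve)$, for every fixed $x\in[0,\ve]$ the function $s\mapsto f(x,s)$ is an antiderivative of $s\mapsto f_s(x,s)$, and hence
\[
f(x,t)-f(x,0)=\int_0^{t}f_s(x,s)\,ds, \qquad (x,t)\in \cD(\ve,\wh\ve).
\]
Next I would differentiate in $x$. The integrand $g(x,s):=f_s(x,s)$ is continuous on $\cD(\ve,\wh\ve)$ and its partial $g_x=f_{tx}$ exists and is continuous there by hypothesis; the standard theorem on differentiation under the integral sign then yields
\[
\frac{\p}{\p x}\int_0^{t}f_s(x,s)\,ds=\int_0^{t}f_{sx}(x,s)\,ds.
\]
Combined with the existence of $f_x(x,0)$ for $x\in[0,\ve]$, this shows that $f_x$ exists throughout $\cD(\ve,\wh\ve)$ and satisfies
\[
f_x(x,t)=f_x(x,0)+\int_0^{t}f_{sx}(x,s)\,ds.
\]
A final application of FTC in $t$ (valid because $f_{sx}$ is continuous in $s$) yields $f_{xt}(x,t)=f_{tx}(x,t)$.

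The main obstacle is essentially cosmetic and is the boundary bookkeeping singled out in the parenthetical of the excerpt. At the sides $x=\ve$ or $t=\wh\ve$ of $\cD(\ve,\wh\ve)$, the difference quotients defining the one-sided derivatives of $\int_0^{t}f_s(x,s)\,ds$ and of $\int_0^{t}f_{sx}(x,s)\,ds$ involve increments that must be taken from the interior of the rectangle. Thus, in the proofs of differentiation under the integral sign and of FTC, one replaces the auxiliary integrals $\int_0^{x_0}\cdots\,dr$ and $\int_0^{t_0}\cdots\,ds$ by $\int_{x_0}^{\ve}\cdots\,dr$ and $\int_{t_0}^{\wh\ve}\cdots\,ds$ respectively. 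Since $f_{tx}$ is uniformly continuous on the compact rectangle, all one-sided limits exist and the standard estimates remain unchanged, so the argument carries through on the closed rectangle.
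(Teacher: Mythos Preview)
Your proposal is correct and is essentially the same as what the paper does: the paper does not spell out a proof but refers to Seeley's integral argument \cite{See} and remarks that on the closed rectangle one should, when necessary, replace the integrals $\int_0^{x}\cdot\,ds$ and $\int_0^{t}\cdot\,ds$ by $\int_{x}^{\ve}\cdot\,ds$ and $\int_{t}^{\wh\ve}\cdot\,ds$, which is exactly the boundary bookkeeping you describe.
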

Now, using Theorem \ref{evol}, Remark \ref{RkInvPr} and Notation \ref{NnB1} we can formulate and prove
a uniqueness theorem. 
\begin{theorem} \label{recqua} Assume that  $v\in C_q(\cD, \wt M)$ satisfies on $\cD$ the dNLS equation \eqref{1.1}.
Then this $v(x,t)$ is uniquely determined $($inside the class $C_q(\cD, \wt M))$ by the boundary conditions \eqref{B3}.
\end{theorem}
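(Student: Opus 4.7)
The plan is to reduce uniqueness on all of $\cD$ to uniqueness of the initial profile $v(x,0)$, and to extract the latter from boundary data by combining the dNLS equation with quasi-analyticity.

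Step 1: Extracting the Taylor jet at the corner. From the boundary data $V(0,t)=\cV_0(t)$, $V_x(0,t)=\cV_1(t)$, together with the $C^\infty$ smoothness in $t$ that is forced by the quasi-analytic assumption on $v(x,0)$, I immediately have all $t$-derivatives $\p^k_t v(0,0)$ and $\p^k_t v_x(0,0)$ for $k\geq 0$. The key step is to recover every pure $x$-derivative $\p^j_x v(0,0)$ using the dNLS equation \eqref{1.1}, rewritten as
\[
v_{xx}=-2\I v_t+2vv^*v.
\]
Applying $\p^{j}_x\p^{k-1}_t$ and evaluating at $(0,0)$ gives the recursion
\[
\p^{j+2}_x\p^{k-1}_t v(0,0)=-2\I\,\p^{j}_x\p^{k}_t v(0,0)+2\,\p^{j}_x\p^{k-1}_t(vv^*v)(0,0),
\]
in which I induct on the weighted order $j+2k$: the left-hand side and the first term on the right are of the same weighted order, while the cubic term is a polynomial in derivatives of strictly lower weighted order. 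Each order-exchange between $\p_x$ and $\p_t$ is justified by Proposition~\ref{PnB2}, and the required existence and continuity of all intermediate partials is precisely what the nested corner squares $\cD(\ve_k)$ of Notation~\ref{NnB1} guarantee. The outcome is that every $\p^j_x v(0,0)$ is a universal polynomial in the numbers $\cV_0^{(k)}(0)$, $\cV_1^{(k)}(0)$.

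Step 2: Uniqueness of $v(x,0)$. Let $v,\wti v\in C_q(\cD,\wt M)$ solve dNLS with the same boundary data. Step~1 yields $\p^j_x(v-\wti v)(0,0)=0$ for all $j\geq 0$. Each entry $(v-\wti v)_{ij}(\cdot,0)$ still lies in the quasi-analytic class $C(\{\wt M_k(i,j)\})$ (after enlarging the constant $a(f)$ to accommodate the two summands), so the Denjoy--Carleman criterion \eqref{B2} forces $v(x,0)=\wti v(x,0)$ on $[0,\infty)$.

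Step 3: Propagation in $t$ via the Weyl function. Since $v(x,0)=\wti v(x,0)$, the two Dirac systems at $t=0$ share the same Weyl function $\vp(0,z)$. The matrix $F(0,t,z)$ from \eqref{1.2} depends only on $V(0,t)$ and $V_x(0,t)$, hence the fundamental solution $R(t,z)$ of $R_t=F(0,t,z)R$, $R(0,z)=I_m$, is also common to $v$ and $\wti v$. The evolution formula \eqref{2.8} of Theorem~\ref{evol} then gives $\vp(t,z)=\wti\vp(t,z)$ for every $t\in[0,a)$ and $z\in\BC_+$. Applying Remark~\ref{RkInvPr} slice by slice at each fixed $t$ (the hypothesis \eqref{2.7} ensures local square summability of $V(\cdot,t)$) recovers $v(\cdot,t)=\wti v(\cdot,t)$, which is the desired uniqueness on $\cD$.

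The principal obstacle is Step~1: the weighted-order bookkeeping, and the repeated application of Proposition~\ref{PnB2} inside the shrinking corner squares $\cD(\ve_k)$, where only finite-order $x$-smoothness is available at each level. Everything else is straightforward once the Taylor jet at the corner is secured.
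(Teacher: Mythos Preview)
Your three-step architecture---recover the full Taylor jet $\{\p_x^j v(0,0)\}_{j\geq 0}$ from the boundary data via dNLS, invoke quasi-analyticity to pin down $v(\cdot,0)$, then propagate by the evolution formula for the Weyl function and Remark~\ref{RkInvPr}---is exactly the paper's strategy, and Steps~2 and~3 match the paper essentially verbatim.

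Two remarks on Step~1. First, your sentence ``the $C^\infty$ smoothness in $t$ that is forced by the quasi-analytic assumption on $v(x,0)$'' is a misattribution: quasi-analyticity of $v(\cdot,0)$ says nothing about $t$-regularity. What actually supplies the higher $t$-derivatives is the dNLS relation $2v_t=\I(v_{xx}-2vv^*v)$ iterated, which converts each $\p_t$ into two $\p_x$'s; combined with the $C^k$-in-$x$ regularity on $\cD(\ve_k)$ from Notation~\ref{NnB1}, this yields the existence and continuity of every $\p_x^j\p_t^k v$ near the corner. You allude to this later in the paragraph, so the idea is there, but the stated reason is wrong. Second, your weighted-order induction on $j+2k$ for the values at $(0,0)$ is a legitimate reorganization of the paper's scheme, which instead inducts on the $x$-order $r$ and recovers the \emph{functions} $\p_x^k v(0,t)$ and $\p_x^k v_t(0,t)$ on shrinking $t$-intervals (so only one $t$-differentiation of the boundary data is needed at each step). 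Both routes arrive at the same jet; the paper's bookkeeping is a bit more economical in that it never explicitly invokes $\p_t^k$ for $k\geq 2$.
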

\begin{proof}  It is immediate from Notation \ref{NnB1} and the definition of quasi-analytic classes that $v(x,0)$
is uniquely determined by the derivatives $\left(\frac{d^k}{dx^k}v \right)(0,0)$ $(k \geq 0)$.
Let us recover recursively these derivatives from the boundary conditions.

First, we note that $v_t(0,t)=\frac{d}{dt}\cV_0(t)$ and that $v_{xx}(0,t)$  is immediately recovered from $v(0,t)$, $v_t(0,t)$ and
the dNLS equation  \eqref{1.1}.
Without loss of generality, we assume that the values $\ve_k$ in \eqref{B2'} monotonically decrease.
Since $v$ is three times continuously differentiable (with respect to $x$) in $\cD({\ve_3})$
and dNLS holds, we see that $v_t$ and $v_{tx}$ exist and are continuous. Here, in order to show
that $v_{tx}$ satisfies the conditions above, we differentiate both sides of dNLS: 
\begin{align}& \label{B4}
2v_{tx}=\I\left(v_{xxx}-2\frac{\p}{\p x}(vv^*v)\right).
\end{align}
Hence, in view of
Proposition \ref{PnB2}, we have $v_{xt}=v_{tx}$. We see that $v_{xt}(0,t)$ and so $v_{tx}(0,t)$ is recovered from the second boundary
condition in \eqref{B3}. 

Next, we assume that the functions $\left(\frac{\p^k}{\p x^k}v_t\right)(0,t)$, where  $0 \leq t \leq \ve_{r+2}$, are already recovered for $0 \leq k \leq r$,
 the functions $\left(\frac{\p^k}{\p x^k}v\right)(0,t)$ are recovered for $\, 0 \leq k \leq r+1$, that
\begin{align}& \label{B5}
\left(\frac{\p^k}{\p x^k}v\right)_{xt}= \left(\frac{\p^k}{\p x^k}v\right)_{tx} \quad {\mathrm{for}} \quad 0 \leq k \leq r-1, \quad (x,t) \in \cD(\ve_{r+2}),
\end{align}
and that both sides of the equalities in \eqref{B5} are continuous. Differentiating $r$ times (with respect to $x$) both sides of dNLS 
and rewriting the result in the form
\begin{align}& \label{B6}
\frac{\p^{r+2}}{\p x^{r+2}}v=2\frac{\p^r}{\p x^r}(vv^*v)-2\I \frac{\p^r}{\p x^r}v_{t},
\end{align}
we recover $\left(\frac{\p^{r+2}}{\p x^{r+2}}v\right)(0,t)$ from the already recovered derivatives. 
Differentiating $r+1$ times both sides of dNLS, we derive that the derivative $\frac{\p^{r+1}}{\p x^{r+1}}v_t$ exists and is continuous
in $\cD(\ve_{r+3})$. Taking into account \eqref{B5}, we obtain that 
$$\frac{\p^{r+1}}{\p x^{r+1}}v_t= \left(\frac{\p^{r}}{\p x^{r}}v\right)_{tx}.$$
Hence, the right-hand side of the equality above is continuous and the conditions of Proposition \ref{PnB2}
hold for $f=\frac{\p^{r}}{\p x^{r}}v$. Thus, the equality in \eqref{B5} is valid for $k=r$, that is,
\begin{align}& \label{B7}
\left(\frac{\p^k}{\p x^k}v\right)_{xt}= \left(\frac{\p^k}{\p x^k}v\right)_{tx} \quad {\mathrm{for}} \quad 0 \leq k \leq r, \quad (x,t) \in \cD(\ve_{r+3}).
\end{align}
Differentiating $\left(\frac{\p^{r+1}}{\p x^{r+1}}v\right)(0,t)$ with respect to $t$ and taking into account  \eqref{B7}, we recover
$$\left(\frac{\p^{r+1}}{\p x^{r+1}}v_t\right)(0,t)=\frac{\p}{\p t}\left(\left(\frac{\p^{r+1}}{\p x^{r+1}}v\right)(0,t)\right).$$
Thus, we showed by induction that our assumptions hold for all $r\geq 1$ and, in particular,
the derivatives $\left(\frac{d^k}{dx^k}v \right)(0,0)$ $(k \geq 0)$ are uniquely recovered from the boundary
conditions. In other words, $v(x,0)$ is uniquely determined by the boundary
conditions \eqref{B3}.

In turn, the potential $v(x,0)$ uniquely determines the Weyl function $\vp(0,z)$ (see Proposition \ref{PnW1}). Then, the
evolution $\vp(t,z)$ of the Weyl function is described in Theorem \ref{evol}. Finally, according to Remark \ref{RkInvPr}, 
the matrix function $v(x,t)$ is uniquely recovered from $\vp(t,z)$.
\end{proof}
\begin{remark}\label{Rec} We see that the scheme to recover $v$ from the boundary conditions
follows from Proposition \ref{PnW1}, Theorem \ref{evol} and the proof of Theorem \ref{recqua}.
The only step that we did not describe in detail is the recovery of $v(x,0)$ from the Taylor coefficients
$\left(\frac{d^k}{dx^k}v \right)(0,0)$. Although Taylor coefficients  uniquely determine $v(x,0)$,
the recovery of this function presents an interesting problem, which is not solved completely so far.  
See  \cite{Bang} and \cite[Section III.8]{Beur}  for some important results.
\end{remark}

\section{Defocusing NLS in a quarter-plane}\label{quart}
\setcounter{equation}{0}
Let us consider dNLS for the case that $a=\infty$ in \eqref{1.4}, that is, consider dNLS in the quarter-plane
\begin{equation}      \label{B8}
{\mathcal D}(\infty)=\{(x,\, t):\,0 \leq x <\infty, \,\,0\leq t<\infty\}.
\end{equation}
Then, assuming that $\cV_0(t)$ and $\cV_1(t)$ are bounded, that is, the inequalities
\begin{align}& \label{B9}
\sup_{0\leq t<\infty} \| \cV_0(t)\|<\wh M, \quad \sup_{0\leq t<\infty} \| \cV_1(t)\|<\breve M
\end{align}
hold for some $\wh M, \breve M \, > \, 0$, we will express the Weyl function $\vp(0,z)$ via the boundary conditions \eqref{B3}.
Taking into account Theorem \ref{evol} and Remark \ref{RkInvPr},  we see that in this way we express
the solution $v(x,t)$ of dNLS via boundary conditions. For simplicity, we assume also that both $\cV_0(t)$ and $\cV_1(t)$ are
continuous and note that, in view of \eqref{B9}, $R$ has the following property.
\begin{remark}\label{Rkbd} For a fixed $z$ and each $\delta>0$, there is a value $\ve(\delta)$ such that
if $\|R(s,z)f\|\geq \delta$, then $\|R(t,z)f\|>\delta /2$ for all $t$ from the interval $0 \leq t-s \leq \ve$.
Indeed, from the multiplicative integral representation of the solutions of differential equations $($similar to the considerations in \cite[p.~191]{SaSaR}$)$,  we obtain
\begin{align}& \label{B17}
R(t,z)R(s,z)^{-1}=\lim_{N \to \infty}\prod_{k=1}^N\E^{\frac{\Delta}{N}F_k(N)}, \quad \Delta=t-s, \quad F_k(N)=F(0, s+\frac{k\Delta}{N},z).
\end{align}
Hence, we derive
\begin{align}\nn
\| R(t,z)R(s,z)^{-1}-I_m\| & \leq \overline{\lim_{N \to \infty}}\prod_{k=1}^N\E^{\frac{\Delta}{N}C}-1\leq 
\overline{\lim_{N \to \infty}}\prod_{k=1}^N\Big(1+\frac{\Delta}{N}C\E^{\frac{\Delta}{N}C}\Big)-1
\\ &  \label{B18}
\leq
\lim_{N \to \infty}\Big(1+\frac{\Delta }{N}C_1\Big)^N-1=\E^{C_1\Delta }-1
\end{align}
for some constants $C$, $C_1$ $(C\geq \|F\|)$. According to \eqref{B18}, when $\|R(s,z)f\|\geq \delta$  and $\Delta$ is small enough,
we have $\|R(t,z)f\|>\delta /2$.
\end{remark}
\begin{theorem}\label{TmQP} Let $v$  satisfy the conditions of Theorem    \ref{evol} 
and continuous boundary conditions \eqref{B3}, for which \eqref{B9} holds. Then, in the domain
\begin{align}& \label{B10}
\Om=\{z: \quad \Im(z) \geq 1/2, \quad \Re(z)\leq - \wh M\}
\end{align}
we have the equality
\begin{align}& \label{B11}
\vp(0,z) = -\lim_{t \to \infty} R_{22}(t,z)^{-1}R_{21}(t,z),
\end{align}
where $R_{ik}(t,z)$ are the blocks of $R(t,z)$, which is given by the relations
$R_t=FR$, $R(0,z)=I_m$, and $V$ and $V_x$ in $F$ may be substituted by their boundary
values $\cV_0(t)$ and $\cV_1(t)$, respectively.
\end{theorem}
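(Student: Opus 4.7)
The plan is to invert the evolution formula \eqref{2.8}. First, I would show that $R(t,z)$ is $j$--contractive throughout $\Om$. Specializing \eqref{2.9} to $x=0$ and using $V(0,s)=\cV_0(s)$ gives
\begin{align*}
\tfrac{\p}{\p s}\bigl(R(s,z)^* j R(s,z)\bigr)=2\Im z\cdot R(s,z)^*\bigl((2\Re z)I_m+\cV_0(s)\bigr)R(s,z),
\end{align*}
and for $z\in\Om$ the bound $\|\cV_0(s)\|<\wh M$ together with $2\Re z\leq-2\wh M$ forces the Hermitian bracket to be $\leq -\wh M I_m$, while $\Im z\geq 1/2$ keeps the prefactor uniformly away from zero. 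Hence $\frac{\p}{\p s}(R^*jR)\leq -\wh M\,R^*R$ on $\Om$. Integrating from $0$ to $t$ and extracting the $(2,2)$ block yields $R_{22}(t,z)^*R_{22}(t,z)\geq I_{m_2}$, so $R_{22}(t,z)$ is invertible with $\|R_{22}(t,z)^{-1}\|\leq 1$ on $\Om$.

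Next I would set $g(t,z):=R(t,z)\begin{bmatrix}I_{m_1}\\ \vp(0,z)\end{bmatrix}$ with blocks $g_1=R_{11}+R_{12}\vp(0,z)$ and $g_2=R_{21}+R_{22}\vp(0,z)$, so that the algebraic identity $R_{22}^{-1}R_{21}+\vp(0,z)=R_{22}^{-1}g_2$ reduces \eqref{B11} to showing $g_2(t,z)\to 0$. By Theorem \ref{evol}, $\vp(t,z)\,g_1=g_2$, and by Proposition \ref{PnW1} applied at time $t$ the Weyl function $\vp(t,z)$ satisfies $\vp(t,z)^*\vp(t,z)\leq I_{m_1}$; consequently $g_1^*g_1-g_2^*g_2\geq 0$ at every $t$. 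Sandwiching the differential inequality of the previous paragraph between $\begin{bmatrix}I_{m_1}&\vp(0,z)^*\end{bmatrix}$ and its adjoint and integrating, I get
\begin{align*}
\wh M\int_0^t\bigl(g_1^*g_1+g_2^*g_2\bigr)(s,z)\,ds\leq\bigl(I_{m_1}-\vp(0,z)^*\vp(0,z)\bigr)-\bigl(g_1^*g_1-g_2^*g_2\bigr)(t,z)\leq I_{m_1},
\end{align*}
so $\|g(\cdot,z)\|^2$ is integrable on $[0,\infty)$.

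Finally I would promote this $L^1$ bound to the pointwise limit via Remark \ref{Rkbd}. Each column of $g(t,z)$ has the form $R(t,z)f$ for a fixed vector $f\in\BC^m$, and \eqref{B18} shows that $s\mapsto\|R(s,z)f\|$ cannot drop by more than a fixed factor over an interval of length $\ve$ depending only on $z$. If $\|g(t,z)\|\not\to 0$ one extracts $t_n\to\infty$ with $\|g(t_n,z)\|\geq\delta>0$ and pairwise disjoint intervals $[t_n,t_n+\ve]$; then $\int_0^\infty\|g(s,z)\|^2\,ds\geq\sum_n(\delta/2)^2\ve=\infty$, contradicting the bound above. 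Hence $\|g_2(t,z)\|\leq\|g(t,z)\|\to 0$ and, in combination with $\|R_{22}(t,z)^{-1}\|\leq 1$, this yields \eqref{B11}. The main technical obstacle I expect is the clean column-wise application of Remark \ref{Rkbd} and the extraction of the disjoint subsequence; once $j$--contractivity is in place, everything else is routine linear algebra.
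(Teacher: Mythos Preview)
Your argument is correct and follows the paper's proof essentially step for step: the $j$--contractivity estimate on $\Om$ yielding both $R_{22}^*R_{22}\geq I_{m_2}$ and the differential inequality, the use of \eqref{2.8} together with the non-expansivity of $\vp(t,z)$ to sandwich and obtain the $L^1$ bound \eqref{B16}, and the promotion to the pointwise limit via Remark~\ref{Rkbd} are exactly the paper's steps (B12)--(B20). Your contradiction argument for passing from integrability to $\|g(t,z)\|\to 0$ is in fact a bit more explicit than the paper's, and the column-wise concern you flag is harmless since \eqref{B18} already gives a matrix-level estimate $\|R(t,z)R(s,z)^{-1}-I_m\|\leq \E^{C_1\Delta}-1$.
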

\begin{proof}  Setting $x=0$ in \eqref{2.9}  and taking into account the first inequality in \eqref{B9}, we obtain
\begin{align}& \label{B12}
\frac{d}{dt}\big(-R(t,z)^*jR(t,z)\big) \geq \wh M R(t,z)^*R(t,z), \quad z\in \Om.
\end{align}
On the other hand, formula  \eqref{2.8} for Weyl functions $\vp(t,z)$ yields
\begin{align}& \label{B13}
\begin{bmatrix} I_{m_1} \\ \vp(t,z)
\end{bmatrix}=R(t,z)
\begin{bmatrix} I_{m_1} \\ \vp(0,z)
\end{bmatrix}
\big(R_{11}(t,z)+R_{12}(t,z)\vp(0,z)\big)^{-1}.
\end{align}
Relations \eqref{2.18'} and \eqref{B13} imply that
\begin{align}& \label{B14}
\begin{bmatrix} I_{m_1} & \vp(0,z)^*
\end{bmatrix}R(t,z)^*jR(t,z)
\begin{bmatrix} I_{m_1} \\ \vp(0,z)
\end{bmatrix}
\geq 0.
\end{align}
Since $R(0,z)=I_m$, we derive from \eqref{B12} and \eqref{B14}
the inequality
\begin{align}& \nn
\begin{bmatrix} I_{m_1} & \vp(0,z)^*
\end{bmatrix}\int_0^{t}R(s,z)^*R(s,z)ds 
\begin{bmatrix} I_{m_1} \\ \vp(0,z)
\end{bmatrix} \leq
(1/\wh M)\begin{bmatrix} I_{m_1} & \vp(0,z)^*
\end{bmatrix} j
\begin{bmatrix} I_{m_1} \\ \vp(0,z)
\end{bmatrix}
, \quad {\mathrm{i.e.}},
\\ & \label{B16}
\int_0^{\infty}\begin{bmatrix} I_{m_1} & \vp(0,z)^*
\end{bmatrix}R(s,z)^*R(s,z)
\begin{bmatrix} I_{m_1} \\ \vp(0,z)
\end{bmatrix}ds
\leq (1/\wh M) I_{m_1}.
\end{align}
From \eqref{B16} and Remark \ref{Rkbd} we see that
\begin{align}& \label{B19}
\lim_{t\to \infty}\left\| R(t,z)
\begin{bmatrix} I_{m_1} \\ \vp(0,z)
\end{bmatrix}
\right\|=0.
\end{align}
It is immediate from \eqref{B12} that for $z\in \Om$ we have
\begin{align}\nn
R_{22}(t,z)^*R_{22}(t,z) &\geq R_{22}(t,z)^*R_{22}(t,z)-R_{12}(t,z)^*R_{12}(t,z)
\\ & \label{B20}
\geq R_{22}(0,z)^*R_{22}(0,z)-R_{12}(0,z)^*R_{12}(0,z)=I_{m_2}.
\end{align}
 Finally relations \eqref{B19} and \eqref{B20} yield \eqref{B11}.
\end{proof}
We note that for the scalar case this result was derived in \cite{Sa87, SaLev2}.
An analog of this result for the  focusing NLS (the square matrix case) was obtained in
\cite{SaA021}. See also some analogs for the sine-Gordon model in \cite{SaASG} and
\cite[Corollary 6.25]{SaSaR}. 
\section*{Acknowledgements} The research  was
supported by the  Austrian Science Fund (FWF) under Grant  No. P24301.

%*******************************************************************
%BIBLIOGRAPHY
%*******************************************************************

%%%%%%%%%%%%%%%%%%%%%
%%%%%%%%%%%%%%%%%%%%%

\end{document}